\numberwithin{equation}{section}
\theoremstyle{plain}
\newtheorem{thm}{Theorem}
\newtheorem{lemma}[thm]{Lemma}
\newtheorem{prop}[thm]{Proposition}
\newtheorem{cor}[thm]{Corollary}
\newtheorem{deph}[thm]{Definition}
\theoremstyle{definition}
\newtheorem{remark}[thm]{Remark}
\newtheorem{eg}[thm]{Example}
\newcommand{\Z}{\mathbb{Z}}
\newcommand{\N}{\mathbb{N}}
\newcommand{\cat}[1]{{\mathsf{#1}}} 
\newcommand{\id}{\mathrm{id}} 
\newcommand{\join}{\vee}
\DeclareMathOperator*{\bigjoin}{\bigvee}
\DeclareMathOperator{\ent}{Ent}
\DeclareMathOperator{\qent}{qEnt}
\DeclareMathOperator{\mat}{Mat}
\mathchardef\hy="2D
\let\originalleft\left
\let\originalright\right
\renewcommand{\left}{\mathopen{}\mathclose\bgroup\originalleft}
\renewcommand{\right}{\aftergroup\egroup\originalright}
\title{Partially observable systems and quotient entropy via graphs}
\author[1]{Leonhard Horstmeyer\footnote{Correspondence: horstmeyer [at] csh.ac.at}\footnote{LH has been supported by the Austrian Science Fund (FWF) via the grant P29252.}}
\affil[1]{\small Complexity Science Hub Vienna, Austria}
\author[2]{Sharwin Rezagholi\footnote{Correspondence: sharwin.rezagholi [at] mis.mpg.de}}
\affil[2]{\small Max Planck Institute for Mathematics in the Sciences, Leipzig, Germany}
\date{September 2019}
\begin{document}

\maketitle

\begin{abstract}
\noindent We consider the category of partially observable dynamical systems, to which the entropy theory of dynamical systems extends functorially as quotient-topological entropy. We discuss the structure that emerges. We show how quotient entropy can be explicitly computed by symbolic coding. To do so, we make use of the relationship between the category of dynamical systems and the category of graphs, a connection mediated by Markov partitions and topological Markov chains.

\noindent \emph{Mathematics subject classification}: 37B10 Symbolic dynamics, 37B40 Topological entropy, 37C15 Topological equivalence, conjugacy, invariants, 54H20 Topological dynamics.
\end{abstract}

\section{Introduction}

Often a dynamical system may only be partially observed. The most important invariant of dynamical systems, topological entropy, can be extended to partially observable dynamical systems as quotient-topological entropy, which quantifies the complexity reduction due to partial observability. Just as topological entropy, quotient-topological entropy is hard to compute explicitly. For topological Markov chains, and in turn certain partially observable dynamical systems with Markov partitions, we provide a scheme to calculate quotient-topological entropy. This scheme is the computation of the entropy of a topological Markov chain whose entropy is equal to the respective quotient-entropy. These chains are generated by images of the graphs that generate the symbolic representations of the base systems.

\section{Dynamical systems and partial observability}

The basic mathematical entities of this chapter are from the category $\cat{CHausSur}$ of \emph{compact Hausdorff topological spaces and continuous surjective maps}. A \emph{dynamical system} will be a tuple $(X,f)$ where $X$ is a topological space with a compact Hausdorff topology and $f: X \to X$ is a continuous surjection. It generates the continuous monoid action $X \times \N_0 \to X$ given by $(x,t) \mapsto f^t (x)$. These are the objects of the category $\cat{Sys}$. A morphism $m: (X,f) \to (Y,g)$ is a continuous surjection $m: X \to Y$ such that the following diagram commutes.
\begin{equation*}
\begin{tikzcd}
X \arrow{r}{f} \arrow{d}{m} & X \arrow{d}{m} \\
Y \arrow{r}{g} & Y
\end{tikzcd}
\end{equation*}

We define the category $\cat{PSys}$ of \emph{partially observable systems}. The objects of $\cat{PSys}$ are triples $(X,f,q_X)$ where $(X,f)$ is a dynamical system and $q: X \to \tilde X$ is a continuous surjection. A morphism from $(X,f,q_X)$ to $(Y,g,q_Y)$ is a tuple $(m,n)$, where $m:X \to Y$ and $n : \tilde X \to \tilde Y$ are continuous surjections, such that the following diagram commutes.
\begin{equation*}
\begin{tikzcd}
X \arrow{r}{f} \arrow{d}{m} & X \arrow{r}{q_X} \arrow{d}{m} & \tilde X \arrow{d}{n} \\
Y \arrow{r}{g} & Y \arrow{r}{q_Y} & \tilde Y \\
\end{tikzcd}
\end{equation*}

\begin{remark}
There is a forgetful functor $F : \cat{Sys} \to \cat{CHausSur}$ given by the assignments $(X,f) \mapsto X$ and $m \mapsto m$ on morphisms. The functor $F$ forgets the dynamics.
\begin{equation*}
\begin{tikzcd}
X \arrow{r}{f} \arrow{d}{m} & X \arrow{d}{m} \\
Y \arrow{r}{g} & Y
\end{tikzcd}
\, \xmapsto{F} \,
\begin{tikzcd}
X \arrow{d}{m} \\
Y
\end{tikzcd}
\end{equation*}
The pair of functors $\left( F, \id_{\cat{CHausSur}} \right)$ generates the comma category $\faktor{F}{\id}$. Its objects are triples of the form $\big( (X,f) , \tilde X , q_X \big)$ where $(X,f)$ is a dynamical system, $\tilde X$ is a compact Hausdorff space, and $q_X : X \to \tilde X$ is a continuous surjection. A morphism between $\big( (X,f) , \tilde X , q_X \big)$ and $\big( (Y,g) , \tilde Y , q_Y \big)$ is a tuple $(m,n)$, where $m: (X,f) \to (Y,g)$ is a morphism in $\cat{Sys}$ and $n: \tilde X \to \tilde Y$ is a morphism in $\cat{CHausSur}$. We obtain the category $\cat{PSys}$ by switching to compact notation.
\end{remark}

There is a forgetful functor $U : \cat{PSys} \to \cat{Sys}$ which forgets the partial observability. On objects we have $(X,f,q) \mapsto (X,f)$ and on morphisms we have $(m,n) \mapsto m$. There is a functorial embedding $E: \cat{Sys} \hookrightarrow \cat{PSys}$ where on objects we have $(X,f) \mapsto (X,f,\id)$, and on morphisms we have $m \mapsto (m,m)$.

The partially observable systems $(X,f,q)$ where $q: X \to \tilde X$ is a morphism of $\cat{Sys}$ admit the projection $(X,f,q) \mapsto  (\tilde X, \tilde f)$. The archetypes of systems which admit these projections are the product systems.

\begin{remark}
We have an adjunction $U \vdash E$ where $U \circ E = \id_{\cat{Sys}}$. This exhibits $\cat{Sys}$ as a coreflective subcategory of $\cat{PSys}$.
\end{remark}

\section{Entropy and quotient entropy}

Given a sequence of positive numbers $\{ x_t \}$, we denote its exponential growth rate by
$$
\cat{GR}_t ( x_t ) \coloneqq \limsup_{t \to \infty} \frac{1}{t} \ln (x_t) .
$$
This growth rate is zero, if the sequence grows subexponentially, and infinite, if the sequence grows superexponentially.

The chief numerical invariant of a dynamical system $(X,f)$ is its topological entropy \cite{adler65}, the quantity
$$
h (X,f) \coloneqq \sup \left\{ \cat{GR}_n \left( \# \bigjoin_{i=0}^n f^{-i}\mathcal{U} \right) \Bigg| \text{$\mathcal{U}$ is open cover of $X$} \right\}
$$
where $\mathcal{A} \join \mathcal{B} \coloneqq \{ A \cap B\}_{A \in \mathcal{A}, B \in \mathcal{B}}$ and $\# \mathcal{C}$ denotes the minimal cardinality of a finite subcover of the open cover $\mathcal{C}$.

A well-known property of entropy is that it is functorial: If there is a diagram $(X,f) \to (Y,g)$, then $h(X,f) \geq h(Y,g)$. The assignment $\ent : \cat{Sys} \to \big( [0,\infty], \geq \big)$ that assigns $(X,f) \mapsto h(X,f)$ on objects is a functor.  (We consider the ordered set $( [0,\infty], \geq )$ as a thin category in the usual way.)

We want to quantify the loss of complexity due to partial observability. The quantity that we use for this purpose is \emph{quotient-topological entropy}. We define the quotient-topological entropy of the partially observable system $(X,f,q)$ as
\begin{align*}
&\tilde h (X,f,q) \\
&\coloneqq \sup \left\{ \cat{GR}_t \left( \# \bigjoin_{i=0}^n f^{-i}\mathcal{U} \right) \Bigg| \text{$\mathcal{U}$ is open cover of $X$ in the $q$-induced topology} \right\} \\
&= \sup \left\{ \cat{GR}_t \left( \# \bigjoin_{i=0}^n f^{-i}(q^{-1} \mathcal{U}) \right) \Bigg| \text{$\mathcal{U}$ is open cover of $X$} \right\} ,
\end{align*}
That the quotient-topological entropy is a well-defined number in $[0,\infty]$, follows from the same arguments used for topological entropy \cite{adler65}.

\begin{remark}
If $X$ is a metric space, the quotient-entropy may be computed by taking a limit along covers by quotient-metric balls of vanishing diameter. In fact, Bowen's construction \cite{bowen71} may be applied to the quotient metric.
\end{remark}

\begin{prop}
\label{functoriality}
The assignment $\qent: \cat{PSys} \to \big( [0,\infty] , \geq \big)$ where $\qent (X,f,q) = \tilde h (X,f,q)$ on objects is a functor.
\end{prop}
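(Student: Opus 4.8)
The plan is to exploit that the codomain $\big([0,\infty],\geq\big)$ is a thin category: between any two objects there is at most one morphism, so preservation of identities and composition is automatic, and the only content is that $\qent$ sends morphisms to morphisms. Concretely, for every morphism $(m,n) : (X,f,q_X) \to (Y,g,q_Y)$ of $\cat{PSys}$ I must establish the monotonicity
$$\tilde h(X,f,q_X) \;\geq\; \tilde h(Y,g,q_Y).$$
I would treat this exactly as one treats the functoriality of ordinary topological entropy, but now tracking both commuting squares of the $\cat{PSys}$-diagram simultaneously: the left square carries the dynamics, the right square carries the observation maps.

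First I would record the elementary counting lemma underlying the argument: if $\phi$ is a continuous surjection and $\mathcal{C}$ is an open cover of its codomain, then $\#(\phi^{-1}\mathcal{C}) = \#\mathcal{C}$. The inequality $\#(\phi^{-1}\mathcal{C}) \leq \#\mathcal{C}$ holds because the preimage of a finite subcover is a finite subcover of the domain, while $\#\mathcal{C} \leq \#(\phi^{-1}\mathcal{C})$ uses surjectivity: pushing a minimal subcover $\{\phi^{-1}W_j\}$ forward along $\phi$ recovers a subcover $\{W_j\}$ of the codomain. This is precisely where the hypothesis that all morphisms of $\cat{CHausSur}$ are surjective enters.

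Next, I would fix an arbitrary open cover $\mathcal{W}$ of $\tilde Y$; its preimage $n^{-1}\mathcal{W}$ is an open cover of $\tilde X$ by continuity of $n$, hence an admissible test cover in the supremum defining $\tilde h(X,f,q_X)$. Commutativity of the right square, $n \circ q_X = q_Y \circ m$, yields the identity $q_X^{-1}(n^{-1}\mathcal{W}) = m^{-1}(q_Y^{-1}\mathcal{W})$, while commutativity of the left square, $m \circ f = g \circ m$, gives $f^{-i}\circ m^{-1} = m^{-1}\circ g^{-i}$. Combining these and using that $m^{-1}$ commutes with the join $\bigjoin$ (since preimage commutes with intersection), I obtain
$$\bigjoin_{i=0}^{N} f^{-i}\big(q_X^{-1}(n^{-1}\mathcal{W})\big) = m^{-1}\left(\bigjoin_{i=0}^{N} g^{-i}(q_Y^{-1}\mathcal{W})\right).$$
Applying the counting lemma to the surjection $m$ shows the two sides have equal minimal subcover cardinality for every $N$, hence their growth rates $\cat{GR}_N$ coincide. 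Since $n^{-1}\mathcal{W}$ is a legitimate test cover on the left, this common growth rate is bounded above by $\tilde h(X,f,q_X)$; taking the supremum over all open covers $\mathcal{W}$ of $\tilde Y$ delivers the desired inequality.

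I do not anticipate a serious obstacle: the argument is a bookkeeping exercise chaining the two commuting squares, and every step is an equality rather than a one-sided estimate, a pleasant feature inherited from working with surjections. The only point demanding mild care is applying the two set-theoretic identities in the correct order—first rewriting $q_X^{-1}n^{-1}$ as $m^{-1}q_Y^{-1}$ via the observation square, then transporting the dynamics through $m^{-1}$ via the dynamical square—so that the final expression is genuinely $m^{-1}$ of a quantity computed entirely within $(Y,g,q_Y)$.
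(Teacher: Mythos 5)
Your proposal is correct and follows essentially the same route as the paper: pull back a $q_Y$-saturated test cover along $m$, use the right square to see the result is $q_X$-saturated (what the paper calls ``respects the fibers of $n$'') and the left square to transport the dynamics, then compare minimal subcover cardinalities. You have merely made explicit the counting lemma ($\#(\phi^{-1}\mathcal{C}) = \#\mathcal{C}$ for surjective $\phi$) and the identity $q_X^{-1} \circ n^{-1} = m^{-1} \circ q_Y^{-1}$, both of which the paper's terser proof leaves implicit.
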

\begin{proof}
Let $(X,f,q_X) \to (Y,g,q_Y)$. The following diagram commutes.
$$
\begin{tikzcd}
X \arrow{r}{f} \arrow{d}{m} & X \arrow{r}{q_X} \arrow{d}{m} & \tilde X \arrow{d}{n} \\
Y \arrow{r}{g} & Y \arrow{r}{q_Y} & \tilde Y
\end{tikzcd}
$$
Open covers of $X$ in the topology induced by $q_X$ are equivalently open covers of $\tilde X$. Open covers of $Y$ in the topology induced by $q_Y$ are equivalently open covers of $\tilde Y$. Let $\mathcal{U}$ be such an open cover of $Y$. The assignment $\mathcal{U} \mapsto \{ m^{-1}(U) \}_{U \in \mathcal{U}}$ yields an open cover of $X$. This assignment respects the fibers of $n$.
\end{proof}

The following statements show that the observable complexity is at most equal to the complexity of the base system (\Cref{reduction}), and that, whenever two quotient maps are ordered with respect to their resolution, the respective quotient entropies behave monotonically (\Cref{mono}). They both follow from the observation that the cardinality of a minimal subcover is monotone on open covers.

\begin{prop}
\label{reduction}
Let $(X,f,q)$ be a partially observable system. Then $\tilde h (X,f,q_X) \leq h (X,f)$.
\end{prop}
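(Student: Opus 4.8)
The plan is to exploit that the topology on $X$ induced by the quotient map $q \colon X \to \tilde X$ is coarser than the original topology on $X$, so that the suprema defining $\tilde h (X,f,q)$ and $h(X,f)$ differ only in the size of the index set over which they range. In other words, I would reduce the inequality to the trivial monotonicity of a supremum taken over a smaller collection of open covers.

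First I would observe that, since $q$ is continuous, every set of the form $q^{-1}(U)$ with $U \subseteq \tilde X$ open is open in $X$. Consequently, any open cover of $X$ in the $q$-induced topology---that is, any family $q^{-1}\mathcal{U} = \{ q^{-1}(U) \}_{U \in \mathcal{U}}$ arising from an open cover $\mathcal{U}$ of $\tilde X$---is, in particular, an honest open cover of $X$ in its original topology. This identifies the collection of covers admissible in the definition of $\tilde h$ with a subcollection of those admissible in the definition of $h$.

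Next I would note that for each such $q$-induced cover the quantity $\cat{GR}_n \big( \# \bigjoin_{i=0}^n f^{-i}(q^{-1}\mathcal{U}) \big)$ is literally one of the quantities appearing in the supremum defining $h(X,f)$; no recomputation is needed, since the underlying family of open sets of $X$ is the same whether one regards it as living in the $q$-induced topology or in the full topology. Because a supremum taken over a subset cannot exceed the supremum taken over the whole set, it follows at once that $\tilde h (X,f,q) \le h(X,f)$.

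I do not anticipate a genuine obstacle: the entire content is the elementary fact that restricting the family of open covers can only decrease the supremal growth rate, which is exactly the stated observation that the minimal-subcover cardinality is monotone on open covers. The only point requiring a little care is the first step, namely invoking continuity of $q$ to confirm that the $q$-induced covers really are open covers of $X$, so that the index-set containment is valid; once this is in place the monotonicity of the supremum finishes the argument.
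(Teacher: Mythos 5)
Your proof is correct and is essentially the paper's own argument: the paper disposes of this proposition in one line by noting that $q$-induced open covers form a subcollection of all open covers of $X$ (equivalently, that minimal-subcover cardinality behaves monotonically across the coarser topology), so the supremum defining $\tilde h$ ranges over a subset of the index set defining $h$. Your write-up simply fleshes out that observation, including the correct remark that the quantity $\cat{GR}_n \big( \# \bigjoin_{i=0}^n f^{-i}(q^{-1}\mathcal{U}) \big)$ is purely set-theoretic and hence unchanged when the cover is viewed in the full topology.
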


\begin{prop}[Quotient entropy is antitone]
\label{mono}
Consider the system $(X,f)$. Let $q_0 : X \to Y_0$ and $q_1: X \to Y_1$ be quotient maps such that $\ker(q_0) \sqsubseteq \ker(q_1)$. Then $\tilde h (X,f, q_0) \geq \tilde h (X,f,q_1)$.
\end{prop}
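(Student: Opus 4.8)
The plan is to reduce the statement to the elementary fact that a supremum taken over a larger family of open covers can only increase. First I would unwind the hypothesis $\ker(q_0) \sqsubseteq \ker(q_1)$: it says that the partition of $X$ into $q_0$-fibres refines the partition into $q_1$-fibres, equivalently $\ker(q_0) \subseteq \ker(q_1)$ as equivalence relations on $X$, so that $q_0$ is the finer (higher-resolution) observation. Concretely, $q_0(x) = q_0(x')$ implies $q_1(x) = q_1(x')$ for all $x,x' \in X$.

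The key structural step is to produce a factorisation of $q_1$ through $q_0$. By the universal property of the quotient, the set-map $r : Y_0 \to Y_1$ defined by $r(q_0(x)) = q_1(x)$ is well defined and satisfies $q_1 = r \circ q_0$; it is surjective because $q_1$ is. Here I would note the one point that deserves care: $r$ is continuous. This follows because $q_0$ is a continuous surjection from a compact space onto a Hausdorff space, hence a closed map and therefore a topological quotient map, so $Y_0$ carries the quotient topology and continuity of $r$ is forced by that of $q_1$. Thus $r$ is a morphism of $\cat{CHausSur}$. This is the step I expect to be the only real obstacle; everything afterwards is formal.

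From $q_1 = r \circ q_0$ I would deduce the inclusion of induced topologies on $X$. For every open $W \subseteq Y_1$ we have $q_1^{-1}(W) = q_0^{-1}(r^{-1}(W))$ with $r^{-1}(W)$ open in $Y_0$, so every set open in the $q_1$-induced topology is open in the $q_0$-induced topology; the $q_1$-induced topology is the coarser one. Consequently every open cover $\mathcal{U}$ of $X$ in the $q_1$-induced topology is also an open cover of $X$ in the $q_0$-induced topology, and the quantity $\cat{GR}_t(\# \bigjoin_{i=0}^n f^{-i}\mathcal{U})$ attached to such a cover is literally the same in both computations.

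Finally I would conclude. The supremum defining $\tilde h(X,f,q_1)$ ranges over the family of $q_1$-induced open covers, which by the previous paragraph is a subfamily of the $q_0$-induced open covers over which $\tilde h(X,f,q_0)$ is the supremum, and on this common subfamily the two functionals agree term by term. A supremum over a subfamily cannot exceed the supremum over the whole, so $\tilde h(X,f,q_1) \leq \tilde h(X,f,q_0)$, which is the claimed inequality. This is precisely the monotonicity of the minimal-subcover cardinality on nested families of covers alluded to before the statement.
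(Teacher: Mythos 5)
Your proof is correct and takes essentially the same route as the paper, which disposes of this proposition (together with \Cref{reduction}) in a single sentence before the statement: the $q_1$-induced open covers form a subfamily of the $q_0$-induced ones, so the defining supremum can only shrink. Your added detail --- factoring $q_1 = r \circ q_0$ and establishing continuity of $r$ from the fact that $q_0$, being a continuous surjection from a compact space to a Hausdorff space, is closed and hence a topological quotient map --- is precisely the step the paper leaves implicit, and you carry it out correctly.
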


If the quotient map is a morphism, the quotient-entropy of the partially observable system equals the entropy of the target.

\begin{prop}
\label{ifquotientismorphism}
Consider $(X,f,q_X)$ where $(X,f) \xrightarrow{q_X} (\tilde X , g)$. Then \newline $h(\tilde X, g) = \tilde h (X, f,q_X)$.
\end{prop}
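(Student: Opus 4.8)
The plan is to exploit the one extra piece of structure available here that is absent in the general case: because $q_X$ is a morphism in $\cat{Sys}$, it intertwines the two dynamics, $q_X \circ f = g \circ q_X$. The definition of $\tilde h (X,f,q_X)$ runs the $f$-dynamics on open covers of $X$ that are pulled back from $\tilde X$, whereas $h(\tilde X, g)$ runs the $g$-dynamics directly on open covers of $\tilde X$. The intertwining should let me transport one cover-counting computation verbatim into the other, matching them term by term for each fixed cover.

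First I would establish the preimage identity $f^{-i}\big(q_X^{-1}(A)\big) = q_X^{-1}\big(g^{-i}(A)\big)$ for every $A \subseteq \tilde X$ and every $i \geq 0$. This is an easy induction on $i$: the case $i=0$ is trivial, and the inductive step is the pointwise equivalence $q_X(f(x)) \in A \iff g(q_X(x)) \in A$, which is exactly $q_X \circ f = g \circ q_X$. Since taking preimages commutes with finite intersections, this lifts from sets to covers and yields, for any open cover $\mathcal{U}$ of $\tilde X$, the equality $\bigjoin_{i=0}^n f^{-i}(q_X^{-1}\mathcal{U}) = q_X^{-1}\big( \bigjoin_{i=0}^n g^{-i}\mathcal{U} \big)$.

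The second ingredient is a counting lemma: for a continuous surjection $q_X : X \to \tilde X$ and any open cover $\mathcal{C}$ of $\tilde X$, the minimal subcover cardinalities agree, $\#(q_X^{-1}\mathcal{C}) = \#\mathcal{C}$. One inequality is immediate, since pulling a finite subcover of $\tilde X$ back along $q_X$ gives a finite subcover of $X$ of the same cardinality, so $\#(q_X^{-1}\mathcal{C}) \leq \#\mathcal{C}$. For the reverse inequality I would use surjectivity: if a subfamily $\{q_X^{-1}(C_j)\}_{j \in J}$ covers $X$, then applying $q_X$ and using the identity $q_X\big(q_X^{-1}(S)\big) = S$, valid precisely because $q_X$ is onto, shows that $\{C_j\}_{j \in J}$ covers $\tilde X$; hence $\#\mathcal{C} \leq \#(q_X^{-1}\mathcal{C})$.

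Combining the two ingredients gives $\#\bigjoin_{i=0}^n f^{-i}(q_X^{-1}\mathcal{U}) = \#\bigjoin_{i=0}^n g^{-i}\mathcal{U}$ for every open cover $\mathcal{U}$ of $\tilde X$. Passing to exponential growth rates $\cat{GR}_n$ and then taking the supremum over all open covers $\mathcal{U}$ of $\tilde X$ yields $\tilde h(X,f,q_X) = h(\tilde X, g)$; the two suprema range over the same index set because, as noted after the definition of $\tilde h$, open covers of $X$ in the $q_X$-induced topology are exactly the pullbacks $q_X^{-1}\mathcal{U}$. I do not expect a genuine obstacle. The only step requiring real care is the reverse inequality in the counting lemma, where surjectivity of $q_X$ is indispensable: without it a proper subfamily could cover $X$ while its image omits part of $\tilde X$, and the equality $\#(q_X^{-1}\mathcal{C}) = \#\mathcal{C}$ would fail.
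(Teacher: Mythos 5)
Your proof is correct, and it reaches the result by a genuinely more elementary, computational route than the paper. The paper's own proof is structural: since $q_X$ is a continuous surjection between compact Hausdorff spaces, the fiber space $X/\ker(q_X)$ (with the quotient topology) is homeomorphic to $\tilde X$, and the intertwining $q_X \circ f = g \circ q_X$ says that the action induced by $f$ on the fibers is carried to $g$ under this identification; quotient entropy, being entropy computed in the $q_X$-induced topology, is then the entropy of this induced system, which is isomorphic to $(\tilde X, g)$. You instead match the two entropy computations cover by cover: the iterated intertwining identity $f^{-i} \circ q_X^{-1} = q_X^{-1} \circ g^{-i}$ turns $f$-refinements of pulled-back covers into pullbacks of $g$-refinements, and your counting lemma $\#\left(q_X^{-1}\mathcal{C}\right) = \#\mathcal{C}$ (where surjectivity enters, exactly as you flag) shows that pulling back does not change minimal subcover cardinalities. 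What each approach buys: yours is self-contained, uses the compact Hausdorff hypothesis only to guarantee that finite subcovers exist, and makes explicit the cover-level bookkeeping that the paper's two-line proof leaves implicit (in particular, that the numbers $\# \bigjoin_{i=0}^n f^{-i}(q_X^{-1}\mathcal{U})$ and $\# \bigjoin_{i=0}^n g^{-i}\mathcal{U}$ literally coincide for each $n$); the paper's is shorter, fits its categorical framing, and reuses isomorphism-invariance of entropy rather than re-deriving it at the level of covers. Your version can fairly be read as the detailed unpacking that the paper's proof presupposes.
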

\begin{proof}
Since the quotient map $q_X$ is a continuous map between compact Hausdorff spaces, $\ker (q_X) \simeq \tilde X$. It remains to note that the action by $g$ on $\tilde X$ is isomorphic to the action by $f$ on fibers of $q_X$.
\end{proof}

\begin{remark}
In the light of \Cref{ifquotientismorphism} and \Cref{mono}, one may use the quantity $h-\tilde h$ as a closure indicator. If the quotient map is an isomorphism, this quantity vanishes. Of course the reverse implication does not hold.
\end{remark}

The basic properties of entropy, additivity on products and multiplicativity under iteration, are shared by quotient entropy.

\begin{prop}
\label{properties}
The following two statements hold.
\begin{enumerate}[(i)]
\item Let $m \in \N$. The $m$-step partially observable system $(X, f^m , q_X)$ fulfills \newline $\tilde h (X, f^m, q_X ) = m \cdot \tilde h (X,f,q_X)$.
\item Let $(X, f, q_X)$ and $(Y,g, q_Y)$ be partially observable systems.\newline Then $\tilde h ( X \times Y, f \times g, q_X \times q_Y ) = \tilde h (X,f,q_X) + \tilde h (Y,g,q_Y)$.
\end{enumerate}
\end{prop}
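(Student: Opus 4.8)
The plan is to imitate the two classical entropy identities $h(f^m)=m\,h(f)$ and $h(f\times g)=h(f)+h(g)$, but carried out over the restricted class of $q$-induced covers. As a preliminary I fix a $q_X$-induced cover, i.e.\ one of the form $\mathcal U=q_X^{-1}\mathcal V$ with $\mathcal V$ an open cover of $\tilde X$, and set $\tilde h(f,\mathcal U)\coloneqq\cat{GR}_n\!\big(\#\bigjoin_{i=0}^{n}f^{-i}\mathcal U\big)$. Since $\#(\mathcal A\join\mathcal B)\le\#\mathcal A\cdot\#\mathcal B$, the sequence $\ln\#\bigjoin_{i=0}^{n}f^{-i}\mathcal U$ is subadditive, so the $\limsup$ is an honest limit and $\tilde h(X,f,q_X)=\sup_{\mathcal U}\tilde h(f,\mathcal U)$ with $\mathcal U$ ranging over $q_X$-induced covers. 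Both parts then reduce to statements about these per-cover quantities.

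For (i), the inequality $\tilde h(X,f^m,q_X)\le m\,\tilde h(X,f,q_X)$ is routine: $\bigjoin_{i=0}^{n-1}(f^m)^{-i}\mathcal U=\bigjoin_{i=0}^{n-1}f^{-mi}\mathcal U$ is coarser than $\bigjoin_{i=0}^{mn-1}f^{-i}\mathcal U$, so its minimal subcover is no larger; dividing by $n$ and letting $n\to\infty$ gives $\tilde h(f^m,\mathcal U)\le m\,\tilde h(f,\mathcal U)$, and I take suprema. The reverse inequality is where I expect the real work. The classical device is to run $f^m$ on the refined cover $\beta\coloneqq\bigjoin_{j=0}^{m-1}f^{-j}\mathcal U$, using $\bigjoin_{i=0}^{n-1}(f^m)^{-i}\beta=\bigjoin_{i=0}^{mn-1}f^{-i}\mathcal U$ to conclude $\tilde h(f^m,\beta)=m\,\tilde h(f,\mathcal U)$. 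The obstacle is admissibility: $\beta$ need not be $q_X$-induced, because $f^{-j}(q_X^{-1}\mathcal V)=(q_X\circ f^{j})^{-1}\mathcal V$ is $q_X$-saturated only when $q_X\circ f^{j}$ factors through $q_X$. This holds precisely when $q_X$ intertwines the dynamics, i.e.\ is a morphism of $\cat{Sys}$; then $f^{-j}(q_X^{-1}\mathcal V)=q_X^{-1}(g^{-j}\mathcal V)$ is admissible, $\beta$ is $q_X$-induced, and the reverse inequality closes. In that case one can alternatively bypass the cover bookkeeping by invoking \Cref{ifquotientismorphism} to replace $\tilde h(X,f,q_X)$ by $h(\tilde X,g)$ and quoting the classical power rule $h(\tilde X,g^m)=m\,h(\tilde X,g)$. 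I would treat this compatibility as the crux of (i) and verify whether it is in force for the systems at hand.

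For (ii) I would first reduce to product covers. A $q$-induced cover of $X\times Y$ is $(q_X\times q_Y)^{-1}\mathcal O$ for an open cover $\mathcal O$ of the compact space $\tilde X\times\tilde Y$; by compactness (the tube lemma) $\mathcal O$ is refined by a rectangle cover $\mathcal V\times\mathcal W$, and since $(q_X\times q_Y)^{-1}(\mathcal V\times\mathcal W)=q_X^{-1}\mathcal V\times q_Y^{-1}\mathcal W$, the product $q$-induced covers $\mathcal U_X\times\mathcal U_Y$ (with $\mathcal U_X=q_X^{-1}\mathcal V$, $\mathcal U_Y=q_Y^{-1}\mathcal W$) are cofinal. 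On such a cover the dynamics splits, $(f\times g)^{-i}(A\times B)=f^{-i}A\times g^{-i}B$, whence $\bigjoin_{i=0}^{n}(f\times g)^{-i}(\mathcal U_X\times\mathcal U_Y)=\big(\bigjoin_{i=0}^{n}f^{-i}\mathcal U_X\big)\times\big(\bigjoin_{i=0}^{n}g^{-i}\mathcal U_Y\big)$. Combined with $\#(\mathcal A\times\mathcal B)\le\#\mathcal A\cdot\#\mathcal B$ and cofinality, this yields the upper bound $\tilde h(X\times Y,f\times g,q_X\times q_Y)\le\tilde h(X,f,q_X)+\tilde h(Y,g,q_Y)$. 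For the matching lower bound the product-cover cardinality is only submultiplicative, so I would instead argue through the separated-set formulation sanctioned by the Remark on Bowen's construction: in the quotient pseudometric the $n$-th Bowen ball splits as a product, so a product of $(n,\varepsilon)$-separated sets for $(f,q_X)$ and $(g,q_Y)$ is $(n,\varepsilon)$-separated for the product, giving the reverse inequality on growth rates. Thus I expect (ii) to be essentially routine once the product structure is exploited, and the genuine difficulty of the proposition to lie entirely in the reverse inequality of (i).
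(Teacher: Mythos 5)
Your reading of part (i) is not only correct, it is decisive: the admissibility obstruction you isolated is fatal, and statement (i) is in fact \emph{false} as written, not merely resistant to the classical proof. Take $X=\{0,1\}^{\Z}$ with $f=\sigma$ the shift and $q\colon X\to\{0,1\}$ the evaluation $q(x)=x_0$ onto the two-point discrete space. Up to refinement the only $q$-induced cover is the partition $\mathcal{U}_0=\big\{\{x:x_0=0\},\{x:x_0=1\}\big\}$, and
\begin{equation*}
\bigjoin_{i=0}^{n}(\sigma^{m})^{-i}\,\mathcal{U}_0 \;=\; \left\{ \{x\in X : x_0=\epsilon_0,\, x_m=\epsilon_1,\, \dots,\, x_{nm}=\epsilon_n\} \;:\; \epsilon\in\{0,1\}^{n+1} \right\} ,
\end{equation*}
a family of $2^{n+1}$ pairwise disjoint nonempty cells, so every subcover must use all of them. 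Hence $\tilde h(X,\sigma^{m},q)=\ln 2$ for every $m\geq 1$, contradicting (i) as soon as $m\geq 2$. Conceptually, passing to $f^m$ subsamples the observations at times $0,m,2m,\dots$, and, unlike in the fully observed case, the intermediate observations cannot be restored by refining the cover, because $\beta=\bigjoin_{j=0}^{m-1}f^{-j}\mathcal{U}$ is not $q$-induced --- exactly your point. You should know that the paper's own proof commits precisely this error: its chain of equalities silently identifies $\limsup_n\frac{1}{nm}\ln\#\bigjoin_{i=0}^{n}f^{-im}\mathcal{U}$ with $\limsup_t\frac{1}{t}\ln\#\bigjoin_{i=0}^{t}f^{-i}\mathcal{U}$, a per-cover identity that classically holds only after refining by $\beta$ and passing to the supremum over \emph{all} covers, which is unavailable here. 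What survives is your conditional statement: when $q$ is a morphism of systems, (i) follows from \Cref{ifquotientismorphism} and the classical power rule; unconditionally one has only the inequality $\tilde h(X,f^m,q)\leq m\,\tilde h(X,f,q)$ that you prove.

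For (ii), your upper bound is sound and in fact repairs the paper's sketch, which asserts that every open cover of $X\times Y$ is a product of covers and that minimal subcovers multiply; both claims are false (on a three-point space the cover $\mathcal{A}=\{\{1,2\},\{2,3\},\{1,3\}\}$ has $\#\mathcal{A}=2$ while $\#(\mathcal{A}\times\mathcal{A})=3<4$), and your cofinality-plus-submultiplicativity argument is the correct substitute. Your lower bound, however, has a genuine gap: from the inequality $s_n(X\times Y,\varepsilon)\geq s_n(X,\varepsilon)\,s_n(Y,\varepsilon)$ for separated sets you may only conclude that a $\limsup$ dominates a $\limsup$ plus a $\liminf$, since separated-set counts are not subadditive; this is exactly why $h(f\times g)\geq h(f)+h(g)$ is a theorem of Goodwyn (classically via the variational principle) rather than an exercise, and in addition Bowen's construction presupposes a metrizable $\tilde X$, whereas the ambient category is compact Hausdorff. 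A route that closes (ii) cleanly, and simultaneously explains the failure of (i): define $\Psi\colon X\to\tilde X^{\N}$ by $\Psi(x)_t=q(f^{t}x)$. Then $\Psi$ is continuous, $Z\coloneqq\Psi(X)$ is compact and shift-invariant, $\Psi\circ f=\sigma\circ\Psi$, and since joins $\bigjoin_{i=0}^{N}\sigma^{-i}\pi_0^{-1}\mathcal{V}$ of time-zero covers are cofinal among open covers of $Z$, one gets $\tilde h(X,f,q)=h(Z,\sigma)$. For products, $\Psi_{X\times Y}=\Psi_X\times\Psi_Y$ up to the canonical homeomorphism $(\tilde X\times\tilde Y)^{\N}\cong\tilde X^{\N}\times\tilde Y^{\N}$, so the observed system of the product \emph{is} the product of the observed systems and (ii) reduces to the classical product theorem; for powers, the observed system of $(X,f^{m},q)$ is the subsampled image of $Z$ under $(z_t)_t\mapsto(z_{mt})_t$, not $(Z,\sigma^{m})$, which is why no power rule can hold.
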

\begin{proof}[Sketch of proof]
The proof makes use of basic properties \cite{adler65}. We sketch it for completeness.

\noindent $(i)$: For any open cover of $X$, we have
\begin{align*}
\cat{GR}_n \left( \# \bigjoin_{i=0}^n (f^m)^{-i} \mathcal{U} \right) &= \cat{GR}_n \left( \# \bigjoin_{i=0}^n f^{-mi} \mathcal{U} \right) = \limsup_{n \to \infty} \frac{1}{n} \ln \left( \# \bigjoin_{i=0}^n f^{-mi} \mathcal{U} \right) \\
&= m \cdot \limsup_{n \to \infty} \frac{1}{nm} \ln \left( \# \bigjoin_{i=0}^n f^{-im} \mathcal{U} \right) \\
&= m \cdot \limsup_{t \to \infty} \frac{1}{t} \ln \left( \# \bigjoin_{i=0}^t f^{-t} \mathcal{U} \right) \\
&= m \cdot \cat{GR}_t \left( \# \bigjoin_{i=0}^t f^{-t} \mathcal{U} \right) .
\end{align*}

\noindent $(ii)$: An open cover in the product topology of $X \times Y$ yields open covers of $X$ and $Y$ by projection. Also every open cover of $X \times Y$ arises as a product of open covers of $X$ and $Y$. The same reasoning applies to minimal subcovers. Let $\mathcal{U}$ be a minimal cover of $X$ and let $\mathcal{V}$ be a minimal cover of $Y$. Then $\mathcal{U} \times \mathcal{V}$ is a minimal cover of $X \times Y$ and $| \mathcal{U} \times \mathcal{V} | = |\mathcal{U}| \cdot |\mathcal{V}|$. We conclude that $\ln \big( | \mathcal{U} \times \mathcal{V} | \big) = \ln \big( |\mathcal{U}| \big) + \ln \big( |\mathcal{V}| \big)$.
\end{proof}

\section{Symbolic dynamics and the category of graphs}

\begin{deph}[The category $\cat{DiGraph}$]
The objects are finite square matrices with entries in $\{ 0,1 \}$. We identify matrices under the equivalence relation where $A \sim B$ if and only if there exists a permutation matrix $P$ such that $P^{-1} A P = B$. A morphism $A \xrightarrow{f} B$, where $A$ in $n \times n$ and $B$ is $m \times m$, is a surjection $f: \{1,...,n\} \to \{1,...,m\}$ such that $A_{ij} = 1$ implies $B_{f(i) f(j)} = 1$.
\end{deph}

Note that our graph morphisms do not include the proper graph embeddings. We recall the construction of symbolic dynamical systems. Given a finite set $\{1,...,l\}$ equipped with the discrete topology, we consider the shift space $\{1,...,l\}^\Z$ with the product topology. The map $\sigma: \{1,...,l\}^\Z \to \{1,...,l\}^\Z$ defined by $\sigma(s)_t = s_{t+1}$ is a homeomorphism. A subshift is a closed $\sigma$-invariant subset $S \subseteq \{1,...,l\}^\Z$. A special class of symbolic dynamical systems is given by the topological Markov chains.

\begin{deph}[Topological Markov chain]
Let $G$ be a finite directed graph with vertex set $\{1,...,l\}$. The subset $S_G \coloneqq \left\{ s \in \{1,...,l\}^\Z \big| \text{$G_{s_t,s_{t+1}} =1$ for all $t \in \Z$} \right\} \subseteq \{1,...,l\}^\Z$ is closed and $\sigma$-invariant. The system $(S_G, \sigma)$ is the topological Markov chain induced by $G$.
\end{deph}

It is well-known \cite{parry64}, that $h(S_G, \sigma) = \ln (\lambda_G^{\max})$, the natural logarithm of the spectral radius of the graph $G$. In the theory of dynamical systems, subshifts provide a link between continuous-topological approaches and discrete-algebraic ones. We recall the characterization of morphisms between subshifts.

\begin{thm}[Curtis, Lyndon, and Hedlund \cite{hedlund69}]
Let $S \subseteq \{1,...,s\}^\Z$ and $T \subseteq \{1,...,b\}^\Z$ be subshifts. Then the following two statements are equivalent.
\begin{enumerate}
\item The map $C: S \to T$ is continuous and $\sigma$-equivariant.
\item The map $C: S \to T$ is a sliding block code: There exists $m \in \N$ and a local defining map $c: \{1,...,a\}^{\{-m,...,m\}} \to \{1,...,b\}$ such that $C(w)_t = c \left( \{w_{t-m}, ..., w_{t+m} \} \right)$.
\end{enumerate}
\end{thm}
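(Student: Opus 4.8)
The plan is to prove the two implications separately. The direction $(2) \Rightarrow (1)$ is routine; the genuine content lies in $(1) \Rightarrow (2)$, which rests on the compactness of subshifts.

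First I would dispatch $(2) \Rightarrow (1)$. If $C$ is a sliding block code with local map $c$ and radius $m$, then $C$ is shift-equivariant because the same rule is applied uniformly at every coordinate: both $C(\sigma w)_t$ and $\sigma(C(w))_t = C(w)_{t+1}$ equal $c(w_{t+1-m}, \dots, w_{t+1+m})$. Continuity is immediate in the product topology, since each output coordinate $C(w)_t$ depends on only finitely many input coordinates, hence is locally constant in $w$.

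The real work is $(1) \Rightarrow (2)$. The starting point is that $S$ is a closed subset of $\{1,\dots,s\}^\Z$, which is compact by Tychonoff's theorem, so $S$ is itself compact. Let $\pi_0 : T \to \{1,\dots,b\}$ be the projection onto the zeroth coordinate and set $\phi \coloneqq \pi_0 \circ C : S \to \{1,\dots,b\}$, a continuous map into a finite discrete space. For each $w \in S$ the singleton $\{\phi(w)\}$ is open, so by continuity there is a radius $k_w$ such that $\phi$ is constant on the cylinder of all points agreeing with $w$ on the coordinates $-k_w, \dots, k_w$. These cylinders form an open cover of $S$; by compactness finitely many suffice, and I would take $m$ to be the maximal radius appearing among them. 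A short check then shows that if two points of $S$ agree on the block of coordinates $[-m,m]$ they lie in a common cylinder of the finite subcover and so share the same $\phi$-value; thus $\phi(w)$ depends only on $(w_{-m}, \dots, w_m)$, which defines the local map $c : \{1,\dots,s\}^{\{-m,\dots,m\}} \to \{1,\dots,b\}$.

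Finally, shift-equivariance propagates the rule across all coordinates. Using $C \circ \sigma = \sigma \circ C$ repeatedly,
\[
C(w)_t = \left( \sigma^t C(w) \right)_0 = \left( C(\sigma^t w) \right)_0 = \phi(\sigma^t w) = c(w_{t-m}, \dots, w_{t+m}),
\]
which is exactly the sliding block form. The main obstacle is the extraction of a single uniform window radius $m$: a priori different points of $S$ might require arbitrarily wide windows to determine the zeroth output symbol, and it is precisely the compactness of $S$ that collapses these into one finite $m$ valid everywhere.
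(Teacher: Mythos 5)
Your proof is correct, but note that the paper itself offers no proof of this statement: it is quoted as a classical result with a citation to Hedlund, so there is no internal argument to compare against. What you have written is the standard textbook proof (essentially the one in Lind--Marcus or in Hedlund's original paper): the easy direction $(2)\Rightarrow(1)$ by uniformity of the local rule, and the substantive direction $(1)\Rightarrow(2)$ by compactness of the subshift, continuity of $\pi_0 \circ C$ into a finite discrete space, extraction of a finite subcover of cylinders, and propagation of the zeroth-coordinate rule to all coordinates via equivariance. Your key step --- that two points agreeing on $[-m,m]$ must lie in a common cylinder of the finite subcover, since each cylinder in that subcover is determined by a window of radius at most $m$ --- is exactly right and is where the uniform window radius comes from. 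Two cosmetic remarks: the local map $c$ must be defined on \emph{all} of $\{1,\dots,s\}^{\{-m,\dots,m\}}$, including blocks that never occur in $S$, so you should say explicitly that $c$ is extended arbitrarily on non-occurring blocks; and in the final chain of equalities you use $C \circ \sigma^t = \sigma^t \circ C$ for negative $t$ as well, which is legitimate because $\sigma$ is invertible on a two-sided subshift, but deserves a word. Neither point is a gap.
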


The Curtis-Lyndon-Hedlund theorem implies that the image of a topological Markov chain under a cellular automaton is a subshift, but not necessarily a topological Markov chain, as the following example shows.

\begin{eg}
Consider the subshift with the following Markov graph.
$$
\begin{tikzcd}
0_2 \arrow{r} \arrow[bend right=35pt]{rr} & 0_1 \arrow{l} & 1 \arrow{l} \arrow[loop]{r}
\end{tikzcd}
$$
The $0$-memory cellular automaton $c: \{0_1,0_2,1\} \to \{0,1\}$ given by $0_1,0_2 \mapsto 0$ and $1 \mapsto 1$ maps this subshift of finite type to the even subshift
$$
\left\{ w \in \{0,1\}^\Z \big| \text{Consecutive $1$'s are separated by evenly many $0$'s} \right\} ,
$$
which is not a topological Markov chain.
\end{eg}

The following corollary characterizes $\sigma$-equivariance of $0$-memory cellular automata between topological Markov chains.

\begin{cor}
\label{chainembedding}
Let $A$ be an $n \times n$ adjacency matrix and let $B$ be an $m \times m$ adjacency matrix. Let $c: \{1,...,n\} \to \{1,...,m\}$ be a surjection and let $C:  \{1,...,n\}^\Z \to \{1,...,m\}^\Z$ be the cellular automaton with local defining map $c: \{1,...,n\}^{\{0\}} \to \{1,...,m\}$. Then the graphs $A$ and $B$ are such that $A_{ij} = 1$ implies $B_{c(i) c(j)} = 1$.
\end{cor}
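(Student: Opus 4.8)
The plan is to read the (implicit) hypothesis as the assertion that the cellular automaton $C$ carries the topological Markov chain $S_A$ into $S_B$, that is $C(S_A) \subseteq S_B$, and to recognize that the asserted conclusion ``$A_{ij}=1$ implies $B_{c(i)c(j)}=1$'' is precisely the defining condition for $c$ to be a morphism $A \to B$ in $\cat{DiGraph}$. By the Curtis--Lyndon--Hedlund theorem the map $C$ is automatically continuous and $\sigma$-equivariant, so the only content to extract is the compatibility of the local rule $c$ with the transition structures encoded by $A$ and $B$. I would extract it by evaluating $C$ on a single, carefully chosen bi-infinite sequence that realizes the edge $i \to j$.

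First I would fix a pair $(i,j)$ with $A_{ij}=1$ and produce a point $s \in S_A$ with $s_0 = i$ and $s_1 = j$. Since the vertex set $\{1,\dots,n\}$ is finite, following admissible edges forward from $j$ forces a vertex to repeat, so the forward path can be run into a cycle and continued indefinitely; the same pigeonhole argument applied to edges terminating at $i$ produces an infinite backward continuation. Splicing these one-sided paths on either side of the edge $i \to j$ yields a bi-infinite admissible sequence $s$, i.e.\ a point of $S_A$, with the prescribed symbols in coordinates $0$ and $1$.

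Next I would exploit that $C$ has $0$-memory: its local defining map is $c : \{1,\dots,n\}^{\{0\}} \to \{1,\dots,m\}$, so $C$ acts coordinatewise, $C(s)_t = c(s_t)$ for every $t$. In particular $C(s)_0 = c(i)$ and $C(s)_1 = c(j)$. By hypothesis $C(s) \in S_B$, and the defining property of the chain $S_B$ applied at coordinate $0$ reads $B_{C(s)_0,\,C(s)_1} = 1$, that is $B_{c(i)c(j)} = 1$. As $(i,j)$ was an arbitrary edge of $A$, this establishes the claim and exhibits $c$ as a $\cat{DiGraph}$-morphism.

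The step I expect to be delicate is the construction of the realizing point $s$. It succeeds exactly when the edge $i \to j$ lies on some bi-infinite admissible path, which is guaranteed under the standard convention that the adjacency matrices of topological Markov chains are essential (every vertex carries at least one incoming and one outgoing edge, so $S_A$ uses all of its edges). A ``stranded'' edge of $A$ that no point of $S_A$ traverses would leave $B_{c(i)c(j)}$ unconstrained; I would therefore either invoke essentiality as a standing assumption or restrict the conclusion to edges actually realized in $S_A$, both of which make the coordinatewise evaluation above go through verbatim.
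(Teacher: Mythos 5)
Your proposal is correct and, at its core, runs on the same mechanism as the paper's proof: evaluate the coordinatewise map $C$ on a point of $S_A$ that traverses the edge $i \to j$, and use the (implicit) hypothesis that the image lies in $S_B$ to force $B_{c(i)c(j)} = 1$. The differences are in execution, and they favor your version. The paper argues by contradiction, positing a word $v \in S_A$ with $B_{c(v_t)c(v_{t+1})} = 0$ and claiming this violates $\sigma$-equivariance via the display $(\sigma \circ C(v))_t = \sigma(c(v_t)) \neq c(v_{t+1}) = (C \circ \sigma(v))_t$; this is notationally muddled ($\sigma$ is applied to a single symbol, and in fact both outer terms equal $c(v_{t+1})$, so no inequality arises). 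The genuine contradiction is simply that $C(v)$ contains a forbidden transition, i.e.\ $C(v) \notin S_B$ --- which is exactly the form your argument takes. More importantly, the paper's contradiction hypothesis quantifies over words $v \in S_A$ rather than over edges of $A$, so its proof silently establishes the conclusion only for edges realized by some bi-infinite admissible path; you make this gap explicit, supply the pigeonhole construction of a realizing point, and correctly identify essentiality of $A$ (every vertex has an incoming and an outgoing edge) as the hypothesis needed so that every edge is covered --- precisely the caveat the paper overlooks, since a stranded edge of $A$ leaves $B_{c(i)c(j)}$ unconstrained. In short, your route is the paper's argument done carefully: direct rather than by contradiction, with the edge-realization step actually proved and its limits stated.
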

\begin{proof}
Suppose, aiming for a contradiction, that there exists a word $v \in S_A$ such that $A_{v_t v_{t+1}} = 1$ and $B_{c(v_t) c(v_{t+1})} = 0$ for some $t \in \Z$. This implies
$$
\left( \sigma \circ C (v) \right)_t = \sigma \left( c (v_t) \right) \neq c( v_{t+1}) = \left( C \circ \sigma (v) \right)_t ,
$$
contradicting $\sigma$-equivariance of $C$.
\end{proof}

Via topological Markov chains, the category $\cat{DiGraph}$ is functorially embedded into the category $\cat{Sys}$.

\begin{prop}
\label{equivalenceofcats}
We have a functorial embedding $S : \cat{DiGraph} \hookrightarrow \cat{Sys}$ given by $A \mapsto (S_A, \sigma)$ on objects and the graph morphism $f$ is assigned to $Sf: S_A \to S_B$ where $\left(Sf (w)\right)_t = f(w_t)$, a cellular automaton of memory length $0$. In particular, $\cat{DiGraph}$ is equivalent to the subcategory $\cat{TMChain_0}$ of topological Markov chains and $0$-memory sliding block codes.
\end{prop}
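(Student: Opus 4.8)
The plan is to verify the four defining features of a functorial embedding in turn---well-definedness on objects, well-definedness on morphisms, functoriality, and full faithfulness---and then to deduce the equivalence with $\cat{TMChain_0}$ from essential surjectivity. Throughout I would lean on \Cref{chainembedding} and the Curtis--Lyndon--Hedlund theorem to handle the symbolic side.

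First I would treat objects. For a graph $A$ on $\{1,\dots,n\}$ the set $S_A$ is closed in the full shift $\{1,\dots,n\}^\Z$, since the defining constraint $A_{s_t s_{t+1}}=1$ is a closed condition; being a closed subset of a compact Hausdorff space (Tychonoff), $S_A$ is itself compact Hausdorff. The restriction $\sigma|_{S_A}$ is a homeomorphism of $S_A$ onto itself because the edge condition is shift-symmetric, so in particular $\sigma|_{S_A}$ is a continuous surjection and $(S_A,\sigma)$ is a legitimate object of $\cat{Sys}$. For morphisms, given a graph morphism $f\colon A\to B$ I would first check that $Sf$ lands in $S_B$: for $w\in S_A$ we have $A_{w_t w_{t+1}}=1$, whence $B_{f(w_t)f(w_{t+1})}=1$ by the defining implication of a graph morphism, so $Sf(w)\in S_B$. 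Continuity is automatic since $Sf$ is a $0$-memory sliding block code, and $\sigma$-equivariance is the one-line computation $(Sf\circ\sigma)(w)_t=f(w_{t+1})=(\sigma\circ Sf)(w)_t$, exactly as in the proof of \Cref{chainembedding}. Functoriality, $S\id=\id$ and $S(g\circ f)=Sg\circ Sf$, is then immediate from the coordinatewise formula.

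The genuinely delicate point---and the one I expect to be the main obstacle---is \emph{surjectivity} of $Sf$, which is exactly what $\cat{Sys}$ demands of its morphisms. This does \emph{not} follow from vertex-surjectivity of $f$ alone: for instance, with $A=\mathrm{diag}(1,1)$, $B$ the full $2\times 2$ matrix, and $f=\id$, every hypothesis of a graph morphism holds, yet $S_A$ is just the two fixed points while $S_B$ is the full shift, so $Sf$ misses the walk $\dots1212\dots$. To repair this one must know that every bi-infinite walk in $B$ lifts to one in $A$, i.e. that $f$ covers the edges of $B$ occurring in $S_B$. I would therefore either adopt the standing convention that graphs are essential (every vertex meets a bi-infinite walk) and morphisms are edge-covering, or restrict $B$ to carry no edges beyond those forced by $A$; granting that, a compactness argument on the fibred product of the walk of a given $u\in S_B$ with $A$ (the finite partial lifts form a nonempty family with the finite-intersection property) produces the required bi-infinite lift.

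Finally I would assemble full faithfulness and the equivalence. Faithfulness: if $f\ne g$ then $f(i)\ne g(i)$ for some vertex $i$, and choosing a bi-infinite walk $w\in S_A$ through $i$---which exists precisely by essentiality---gives $Sf(w)_0\ne Sg(w)_0$, so $Sf\ne Sg$. Fullness onto $\cat{TMChain_0}$: a $0$-memory sliding block code $C\colon S_A\to S_B$ has a local defining map $c\colon\{1,\dots,n\}\to\{1,\dots,m\}$; surjectivity of $C$ forces $c$ to be onto the vertices meeting $S_B$, and \Cref{chainembedding} supplies the edge implication $A_{ij}=1\Rightarrow B_{c(i)c(j)}=1$, so $c$ is a graph morphism with $Sc=C$. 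Essential surjectivity is definitional, since every topological Markov chain is of the form $(S_A,\sigma)$. Combining fully-faithfulness with essential surjectivity yields $\cat{DiGraph}\simeq\cat{TMChain_0}$, with the understanding, flagged above, that both surjectivity of $Sf$ and faithfulness rely on the convention that graphs are essential and morphisms cover edges.
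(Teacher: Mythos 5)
Your core verification---that $(S_A,\sigma)$ is a legitimate object of $\cat{Sys}$, that $Sf$ lands in $S_B$ because the edge condition $A_{w_t w_{t+1}}=1 \Rightarrow B_{f(w_t)f(w_{t+1})}=1$ propagates along the whole sequence, and that $Sf$ is continuous, $\sigma$-equivariant, and functorial as a $0$-memory block code---is exactly the paper's sketch, which does nothing beyond this except assert faithfulness. Where you go beyond the paper is in flagging that morphisms of $\cat{Sys}$ are required to be \emph{surjective}, and that surjectivity of $Sf$ does not follow from the definition of a graph morphism; your example with $A=\mathrm{diag}(1,1)$ and $B$ the full $2\times 2$ matrix is correct. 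This is a real gap in the proposition as proved: the paper itself concedes it immediately afterwards (``the existence of a graph morphism implies embeddability of the actions, not the existence of a morphism of systems'') and only recovers genuine morphisms of systems in \Cref{condition}, under the extra hypothesis of a right-inverse graph morphism. Your parallel observation that faithfulness needs every vertex to lie on some bi-infinite walk is also correct; the paper's ``clearly $S$ is faithful'' fails without such a convention, since two vertex maps differing only at a walk-free vertex induce the same block code.

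However, the repair you propose does not close the gap. Essentiality of the graphs together with edge-covering of $f$---even insisting that $B$ carry exactly the edges forced by $A$---does not imply that finite words of $S_B$ lift to finite paths in $A$, and that nonemptiness is precisely what your finite-intersection/compactness argument needs. Concretely, let $A$ have vertices $\{1,2,3\}$ and edges $1\to 3$, $3\to 2$, $2\to 2$, $2\to 1$, and let $f(1)=f(2)=a$, $f(3)=b$. The image graph $B$ then has edges $a\to a$, $a\to b$, $b\to a$ and nothing else; $f$ is edge-covering and both graphs are essential (in $A$ the walk $\dots 2\,2\,1\,3\,2\,2\dots$ passes through every vertex). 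Yet the point $(ab)^\infty \in S_B$ has no preimage: any lift must read $3$ at every $b$, the only edge out of $3$ goes to $2$, and there is no edge $2\to 3$; indeed even the finite word $bab$ has no lift, so your family of partial lifts is empty and compactness never gets started. The fix consistent with the rest of the paper is the one in \Cref{condition}: demand that $f$ admit a right-inverse graph morphism $g$, whence $Sf\circ Sg=\id$ makes $Sf$ surjective with no lifting argument at all. Alternatively one must weaken the target category so that morphisms need not be surjective; as it stands, both the paper's sketch and your repaired version establish functoriality only into the category of compact systems with equivariant, not necessarily surjective, maps.
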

\begin{proof}[Sketch of proof]
Clearly $S$ is faithful. Suppose that $A \xrightarrow{f} B$ in $\cat{DiGraph}$. We want to show that $SA \xrightarrow{Sf} SB$, which is equivalent to the commutativity of the following diagram.
$$
\begin{tikzcd}
S_A \arrow{r}{\sigma} \arrow{d}{Sf} & S_A \arrow{d}{Sf} \\
S_B \arrow{r}{\sigma} & S_B
\end{tikzcd}
$$
Suppose that $A$ is $n \times n$ and that $B$ is $m \times m$. Let $w \in S_A$. Hence $A_{w_t w_{t+1}} = 1$ for all $t \in \Z$. Since $f: \{1,...,n\} \to \{1,...,m\}$ is a morphism, $A_{w_t w_{t+1}} = 1$ implies $B_{f(w_t) f(w_{t+1})} = 1$ and therefore $Sf (w) \in S_B$ by \Cref{chainembedding}.
\end{proof}

\begin{remark}
The categorical products in $\cat{DiGraph}$ correspond to the Kronecker-products of adjacency matrices. The embedding of graphs into dynamical systems respects finite products. We have $S_{A \otimes B} \, \simeq \, S_A \otimes S_B$.
\end{remark}

The existence of a graph morphism implies embeddability of the actions (\Cref{chainembedding}), not the existence of a morphism of systems. A necessary condition for the existence of a morphism can be obtained via the following lemma.

\begin{lemma}
Equivalences of categories preserve epimorphisms.
\end{lemma}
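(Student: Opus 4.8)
The plan is to prove the statement in its full categorical generality: if $\Phi : \cat{C} \to \cat{D}$ is an equivalence of categories and $e : A \to B$ is an epimorphism in $\cat{C}$, then $\Phi e : \Phi A \to \Phi B$ is an epimorphism in $\cat{D}$. The definitional hypothesis is that $g \circ e = h \circ e$ implies $g = h$ for all parallel pairs out of $B$ in $\cat{C}$; the goal is the analogous cancellation property for $\Phi e$ in $\cat{D}$. The natural strategy is to transport any would-be counterexample in $\cat{D}$ back to $\cat{C}$ along the quasi-inverse, use epimorphicity there, and transport the resulting equality forward again.

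First I would invoke that an equivalence of categories $\Phi$ comes with a quasi-inverse functor $\Psi : \cat{D} \to \cat{C}$ together with natural isomorphisms $\eta : \id_{\cat{C}} \Rightarrow \Psi\Phi$ and $\varepsilon : \Phi\Psi \Rightarrow \id_{\cat{D}}$. Next, to test whether $\Phi e$ is epic, I would take an arbitrary parallel pair $\alpha, \beta : \Phi B \to Z$ in $\cat{D}$ satisfying $\alpha \circ \Phi e = \beta \circ \Phi e$, and seek to conclude $\alpha = \beta$. Applying $\Psi$ yields $\Psi\alpha \circ \Psi\Phi e = \Psi\beta \circ \Psi\Phi e$ in $\cat{C}$. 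The key step is to rewrite $\Psi\Phi e$ in terms of $e$ using naturality of $\eta$: the square for $e$ gives $\Psi\Phi e \circ \eta_A = \eta_B \circ e$, so that $\Psi\Phi e = \eta_B \circ e \circ \eta_A^{-1}$. Substituting this and cancelling the isomorphism $\eta_A^{-1}$ on the right, I obtain $(\Psi\alpha \circ \eta_B) \circ e = (\Psi\beta \circ \eta_B) \circ e$.

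Now the epimorphicity of $e$ in $\cat{C}$ applies directly to the parallel pair $\Psi\alpha \circ \eta_B, \Psi\beta \circ \eta_B : B \to \Psi Z$, yielding $\Psi\alpha \circ \eta_B = \Psi\beta \circ \eta_B$. Since $\eta_B$ is an isomorphism it is in particular right-cancellable, so $\Psi\alpha = \Psi\beta$. Finally, because $\Psi$ is itself part of an equivalence it is faithful, and a faithful functor reflects equalities of morphisms; hence $\alpha = \beta$, which is exactly what was to be shown. The cleanest packaging of this argument simply records that $\Psi$ is faithful and that it sends $\Phi e$ to a morphism conjugate (via the isomorphisms $\eta_A, \eta_B$) to $e$, so that $\Phi e$ inherits the cancellation property from $e$.

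The main obstacle I anticipate is purely bookkeeping rather than conceptual: one must be careful that the naturality square is applied to $e$ (not to $\alpha$ or $\beta$, which live in the wrong category), and that the isomorphisms $\eta_A$ and $\eta_B$ are cancelled on the correct side. A tempting shortcut is to claim directly that equivalences preserve all limits and colimits and that epimorphisms are a colimit-type notion, but epimorphisms are not themselves a limit or colimit of a fixed diagram, so that route does not literally apply; the faithfulness-plus-conjugation argument above is the honest and self-contained one. Everything else reduces to the standard fact that faithful functors reflect equalities and that isomorphisms are two-sided cancellable.
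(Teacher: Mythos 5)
Your proof is correct and is the standard argument: transport a test pair along the quasi-inverse, use naturality of $\eta$ to rewrite $\Psi\Phi e$ as $\eta_B \circ e \circ \eta_A^{-1}$, cancel the isomorphisms, apply epimorphicity of $e$, and conclude by faithfulness of $\Psi$. The paper itself offers no proof of this lemma at all --- it is stated as a known fact and immediately used --- so there is nothing to compare against; your argument correctly supplies the missing proof. One aside: your closing remark that the limit/colimit route ``does not literally apply'' is too pessimistic, since $e$ is an epimorphism precisely when the square with $e$ on two sides and $\mathrm{id}_B$ on the other two is a pushout (the cokernel-pair characterization), and equivalences preserve pushouts; that shortcut is therefore also legitimate, though this has no bearing on the validity of the argument you actually gave.
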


Let $A \xrightarrow{f} B$ in $\cat{DiGraph}$. Suppose that $g: B \to A$ is right-inverse to $f$, hence $ f \circ g = \id$. Then $Sg : S_B \to S_A$ is right-inverse to $Sf : S_A \to S_B$. In particular, $Sf$ is surjective. The existence of a section from the generating graph is a sufficient condition for topological Markov chains to be related by a morphism. The following proposition is immediate.

\begin{prop}
\label{condition}
Let $A$ be an $n \times n$ adjacency matrix and let $B$ be an $m \times m$ adjacency matrix. Consider a surjection $c: \{1,...,n\} \to \{1,...,m\}$. The induced map $C: S_A \to S_B$ is a morphism of dynamical systems if $c$ is a graph morphism which admits a right-inverse graph morphism.
\end{prop}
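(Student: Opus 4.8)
The plan is to verify the three conditions that make a map a morphism in $\cat{Sys}$ for the induced map $C = Sc : S_A \to S_B$: continuity, $\sigma$-equivariance, and surjectivity. The first two come for free from the graph-morphism hypothesis, and the third is exactly what the right-inverse supplies.

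First I would observe that, because $c$ is a graph morphism, \Cref{chainembedding} guarantees that $C$ is well defined as a map $S_A \to S_B$, sending admissible bi-infinite words to admissible bi-infinite words, and that it agrees with the memory-$0$ cellular automaton $Sc$ of \Cref{equivalenceofcats}. By the Curtis--Lyndon--Hedlund theorem every sliding block code is continuous and $\sigma$-equivariant, so $C$ is continuous and the square with $\sigma$ commutes. At this stage the only missing ingredient is surjectivity, which is precisely the gap flagged earlier: a bare graph morphism yields embeddability of the actions (via \Cref{chainembedding}), not a morphism of systems.

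For surjectivity I would invoke functoriality of $S$. Writing $d : B \to A$ for the right-inverse graph morphism, so that $c \circ d = \id$ in $\cat{DiGraph}$, the functor $S$ carries this identity to $Sc \circ Sd = S(c \circ d) = S(\id) = \id_{S_B}$. Hence $Sd : S_B \to S_A$ is a section of $C$, so $C$ is a split epimorphism and in particular surjective onto $S_B$. Combining the three properties, $C$ is a continuous $\sigma$-equivariant surjection, that is, a morphism of dynamical systems, which is the claim.

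The one point deserving care is surjectivity, and it is worth isolating why the right-inverse hypothesis is not superfluous. The even-shift example shows that the image of a topological Markov chain under a $0$-memory cellular automaton can fail to fill out the target subshift, so the graph morphism $c$ alone does not control the image of $C$. The section $d$ repairs this functorially: its image $Sd$ is a genuine right inverse at the level of systems, forcing $C$ to be onto without any further combinatorial analysis of admissible words. This is also why the argument can be phrased through the preceding lemma, since a split epimorphism is in particular an epimorphism preserved by the equivalence $S$ onto $\cat{TMChain_0}$.
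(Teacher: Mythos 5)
Your proof is correct and follows essentially the same route as the paper: the graph-morphism hypothesis makes $C = Sc$ a well-defined $0$-memory sliding block code (hence continuous and $\sigma$-equivariant by Curtis--Lyndon--Hedlund), and functoriality of $S$ turns the right-inverse $d$ into a section $Sd$ of $C$, forcing surjectivity. This is exactly the argument the paper gives in the paragraph preceding \Cref{condition}, which is why it declares the proposition ``immediate.''
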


Note that, in general, it is NP-hard to verify whether a graph is the quotient of another.

\section{Quotient entropy and Markov partitions}\label{mpartitions}

There are several equivalent ways to define Markov partitions. We follow Adler \cite{adler98} and Gromov \cite{gromov16}.

\begin{deph}[Markov partition]
A Markov partition for the system $(X,f)$ is a finite collection $\{ U_i \}_{i=1}^n$, $n \geq 2$, of open sets $U_i \subset X$ such that the following three conditions hold.
\begin{enumerate}[(i)]
\item $U_i \cap U_j = \varnothing$ whenever $i \neq j$.
\item $\bigcup_{i = 1}^n U_i$ is dense in $X$.
\item  For all $n,m \in \N$, if simultaneously $f^n ( U_i ) \cap U_j \neq \varnothing$ and $f^m ( U_j ) \cap U_k \neq \varnothing$ hold, then $f^{n+m} ( U_i ) \cap U_k \neq \varnothing$.
\end{enumerate}
\end{deph}

The existence of a Markov partition for a dynamical system is a demanding property. Bowen \cite{bowen70} showed that a Markov partition exists for any system satisfying Axiom A \cite{smale67}. Explicit constructions go back to Sinai \cite{sinai68}. The usefulness of Markov partitions originates in the following construction.

\begin{deph}[Hadamard \cite{hadamard98}]
We assign to a dynamical system $(X,f)$ with Markov partition $\{ U_i \}_{i=1}^n$ the $n \times n$ adjacency matrix $A$ where
$$
A_{ij} = \begin{cases}
\text{$1$ if $f \left( U_i \right) \cap U_j \neq \varnothing$} \\
\text{$0$ otherwise.}
\end{cases}
$$
\end{deph}

The Hadamard construction and the embedding $S: \cat{DiGraph} \hookrightarrow \cat{Sys}$ have some properties of a retraction. Let $(X,f)$ be a dynamical system with Markov partition $\{ U_i \}_{i=1}^n$. Denote by $\phi_i (x)$ the index of the partition element that contains the image of $x \in X$ under $f^i$ up to closure, hence $f^i (x) \in \overline{U}_{\phi_i (x)}$. The assignment $\Phi: X \to [n]^\N$ is defined as $\Phi (x) = \left\{ \phi_i (x) \right\}_{i \in \N}$. This assignment may be one-to-many on some points: We suppose that some choice is made. The pseudo-inverse assignment $\Phi^{-1} : [m]^\N \to 2^X$ is obtained in the following manner. For any $w \in W_A$, define $R_0 (w) = \left\{ x \in X_{w_0} \right\}$, and, iteratively for $t \in \N$,
\begin{equation*}
R_t (w) = \left\{ x \in R_{t-1} (w) : f(x) \in X_{w_t} \right\} .
\end{equation*}
Then set $\Phi^{-1} (w) \coloneqq \bigcap_{t=0}^\infty R_t (w) \subset X$. Often, for example if $f$ is a minimal homeomorphism, we have $| \Phi^{-1} (w) | = 1$.

We are interested in systems that are \emph{finitely presented}, in the sense that they admit a Markov partition such that the Hadamard construction followed by the embedding $S$ yields a homeomorphism. In that case the Hadamard construction yields an isomorphism between the original system and a topological Markov chain. These structural relationships, as well as others appearing in this work, are illustrated in the following diagram.

$$
\label{diagram}
\begin{tikzcd}[column sep = huge]
\cat{PSys} \arrow[shift left=10pt, bend left]{d}{U} \arrow[bend left=20pt]{rd}{\qent} & {} \\
\cat{Sys}  \arrow[shift left=10pt, bend left]{d}[rotate=90,xshift=-3pt]{|} \arrow[hookrightarrow,shift left=10pt, bend left]{u}{E} \arrow{r}{\ent} & \Big( [0,\infty] , \geq \Big) \\
\cat{DiGraph} \arrow[hookrightarrow, shift left=10pt, bend left]{u}{S} \arrow{r}{\lambda_{-}^{\max}} &  \Big( [1,\infty) , \geq \Big) \arrow{u}{\ln(-)}
\end{tikzcd}
$$

\begin{eg}[Expanding circle maps]
\label{motivation}
Consider $[0,1]_\sim$, the unit interval with identified endpoints. The family of maps $E_n : [0,1]_\sim \to [0,1]_\sim$ where $E_n (x) = n x \mod 1$ is finitely presented via the Markov partitions
\begin{equation*}
\left\{ \left( \dfrac{i-1}{n} , \dfrac{i}{n} \right) \right\}_{i=1}^{n} .
\end{equation*}
These partitions correspond to the complete graph on $n$ vertices. (See, for example, \cite[Paragraph 1.3]{brin02}.) Consider the map $E_4$ with the Markov partition into quadrants. We consider the quotient by vertical projection $x \mapsto \cos ( 2 \pi x )$ and the map $\{1,4\} \mapsto 1$, $\{2,3\} \mapsto 2$.
\begin{equation*}
\begin{tikzpicture}
\draw[line width=1pt] (0,0) circle (2);
\draw (0,2.2) -- (0,1.8);
\draw (0,-2.2) -- (0,-1.8);
\draw (2.2,0) -- (1.8,0);
\draw (-2.2,0) -- (-1.8,0);
\draw (1.6,1.6) node {$1$};
\draw (1.6,-1.6) node {$2$};
\draw (-1.6,-1.6) node {$3$};
\draw (-1.6,1.6) node {$4$};
\draw (3.5,0) node {$\longmapsto$};
\draw[line width=1pt] (5,-2) -- (5,2);
\draw (4.8,2) -- (5.2,2);
\draw (4.8,-2) -- (5.2,-2);
\draw (4.8,0) -- (5.2,0);
\draw (5.6,1) node {$1$};
\draw (5.6,-1) node {$2$};
\end{tikzpicture}
\end{equation*}
The Markov graph is the complete graph on four vertices and hence $h(E_4) = \ln (4)$. Clearly no projection of the circle onto the interval is a morphism of the system. We are tempted to say that the quotient map reduces the symbolic dynamics to the full shift on two symbols and that this should imply $\tilde h (E_4) = \ln (2)$.
\end{eg}

\noindent
We proceed by formalizing the above example and by proving its correctness.

\begin{deph}[Compatible topological partition]
Let $(X,f,q)$ be a partially observable system with Markov partition $\mathcal{X} = \{ U_i \}_{i=1}^n$ for $(X,f)$. Consider a surjection $s: \{1,...,n\} \to \{1,...,m\}$ such that $\mathcal{Y} \coloneqq \left\{ q(U_{s(i)}) \right\}_{s(i) \in [m]} \coloneqq \left\{ V_j \right\}_{j \in [m]}$ fulfills $ \mathcal{X} \sqsubseteq q^{-1} \mathcal{Y} $. Then $\mathcal{Y}$ is a compatible topological partition.
\end{deph}

\begin{cor}[Corollary of the definition]
We have a graph morphism $c: A \to B$ where $A$ is the Markov graph of $(X,f)$ given by
$$
A_{ij} = \begin{cases}
\text{$1$ if $f(U_i) \cap U_j \neq \varnothing$}\\
\text{$0$ otherwise,}
\end{cases}
$$
and $B$ is defined by
$$
B_{ij} = \begin{cases}
\text{$1$ if $f \left( q^{-1} V_i \right) \cap q^{-1} V_j \neq \varnothing $} \\
\text{$0$ otherwise.}
\end{cases}
$$
\end{cor}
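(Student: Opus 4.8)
The statement asserts that the surjection $s$ underlying the compatible topological partition is a morphism $A \to B$ in $\cat{DiGraph}$, so the plan is simply to take $c := s$ and verify the single defining condition of a graph morphism, namely that $A_{ij} = 1$ implies $B_{c(i)c(j)} = 1$. Surjectivity of $c$ is given by hypothesis, and the edge-preservation property should follow from nothing more than the elementary observation that a nonempty intersection stays nonempty when both of its arguments are enlarged.

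First I would extract from the hypotheses the inclusion $U_i \subseteq q^{-1}(V_{c(i)})$, valid for every $i \in \{1,\dots,n\}$. By construction each image $q(U_i)$ lies in the block $V_{s(i)} = V_{c(i)}$ of $\mathcal{Y}$, whence $U_i \subseteq q^{-1}(q(U_i)) \subseteq q^{-1}(V_{c(i)})$. Here is where the compatibility requirement $\mathcal{X} \sqsubseteq q^{-1}\mathcal{Y}$ enters: it is exactly what guarantees that these inclusions are mutually consistent, i.e.\ that $q^{-1}\mathcal{Y}$ is a genuine coarsening of the Markov partition and that $U_i$ is assigned to the block indexed by $c(i)$, rather than being split across several blocks of $q^{-1}\mathcal{Y}$.

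With this inclusion in hand the argument is immediate. Assume $A_{ij} = 1$, that is $f(U_i) \cap U_j \neq \varnothing$. Applying $f$ to $U_i \subseteq q^{-1}(V_{c(i)})$ gives $f(U_i) \subseteq f(q^{-1}V_{c(i)})$, and together with $U_j \subseteq q^{-1}(V_{c(j)})$ this yields
$$
\varnothing \neq f(U_i) \cap U_j \subseteq f\big(q^{-1} V_{c(i)}\big) \cap q^{-1}\big(V_{c(j)}\big).
$$
Hence the right-hand intersection is nonempty, so $B_{c(i)c(j)} = 1$ by the definition of $B$, which is precisely the condition for $c$ to be a morphism in $\cat{DiGraph}$. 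The only point requiring genuine care is the indexing in the inclusion $U_i \subseteq q^{-1}(V_{c(i)})$: one must read the definition of $\mathcal{Y}$ and the meaning of $\sqsubseteq$ correctly so as to attach $U_i$ to the block $V_{c(i)}$ and not merely to \emph{some} block of $q^{-1}\mathcal{Y}$. Everything past that inclusion is formal monotonicity of set intersection.
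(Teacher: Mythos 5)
Your proof is correct and follows essentially the same route as the paper's: establish the inclusions $U_i \subseteq q^{-1}V_{c(i)}$ and $U_j \subseteq q^{-1}V_{c(j)}$ from the compatibility condition, then conclude $B_{c(i)c(j)}=1$ by monotonicity of images and intersections. Your treatment is in fact slightly more careful than the paper's, which asserts the key inclusion without comment and loosely writes ``equivalent to'' where only an implication is needed.
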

\begin{proof}
Suppose $A_{ij} = 1$, which is equivalent to $f(U_i) \cap U_j \neq \varnothing$. We have $U_i \subseteq q^{-1} V_{c(i)}$, which is equivalent to $f(U_i) \subseteq f( q^{-1} V_{c(i)})$, and $U_j \subseteq q^{-1} V_{c(j)}$. We conclude $f \left( q^{-1} V_{c(i)} \right) \cap q^{-1} V_{c(j)} \neq \varnothing$, which is equivalent to $B_{c(i) c(j)} = 1$. Hence $c$ is a graph morphism.
\end{proof}

\begin{prop}
\label{fancydiagram}
Let $(X,f)$ be a finitely presented dynamical system with Markov partition $\mathcal{X} = \{ U_i \}_{i=1}^n$. Consider the partially observable system $(X,f,q)$ with quotient map $q: X \to Y$. Suppose that the compatible topological partition $\mathcal{Y} = \{ V_i \}_{i=1}^m$ of $Y$ induces the graph morphism $c: A \to B$. Suppose that $c$ admits a right-inverse graph morphism. Then $(S_B , \sigma)$ is a symbolic dynamical system such that the diagram
\begin{equation*}
\begin{tikzcd}
X \arrow{r}{f} \arrow{d}{\Phi} & X \arrow{d}{\Phi} \arrow{r}{q} & Y \arrow{dd}{h} \\
S_A \arrow{d}{C} \arrow{r}{\sigma} & S_A \arrow{dr}{C} & {} \\
S_B \arrow{rr}{\sigma} & {} & S_B
\end{tikzcd}
\end{equation*}
commutes where $h: Y \to S_B$ is a homeomorphism and $C: \{1,...,n\}^\Z \to \{1,...,m\}^\Z$ is the $0$-memory cellular automaton with local defining map $c$. In particular, this implies $\tilde h (X,f,q) = h(S_B, \sigma)$.
\end{prop}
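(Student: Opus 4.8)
The plan is to reduce the statement to its symbolic model, where \Cref{ifquotientismorphism} applies verbatim. Concretely, I would show that the pair $(\Phi, h)$ exhibits an isomorphism $(X,f,q) \cong (S_A, \sigma, C)$ in $\cat{PSys}$ and that, crucially, $C$ is not merely a $\sigma$-equivariant code but a genuine morphism of dynamical systems. Granting this, the functoriality of $\qent$ (\Cref{functoriality}) forces $\tilde h(X,f,q) = \tilde h(S_A, \sigma, C)$, since an isomorphism in $\cat{PSys}$ maps to inequalities $\geq$ in both directions; and because $C \colon (S_A,\sigma)\to(S_B,\sigma)$ is then a morphism, \Cref{ifquotientismorphism} yields $\tilde h(S_A,\sigma,C) = h(S_B,\sigma)$. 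Chaining these equalities gives the claim.

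I would assemble the routine ingredients first. Finite presentation of $(X,f)$ means precisely that $\Phi\colon X \to S_A$ is a homeomorphism conjugating $f$ with $\sigma$, which is the commuting top-left square. Because $c\colon A \to B$ is a graph morphism, \Cref{equivalenceofcats} (via \Cref{chainembedding}) makes $C = Sc$ a continuous, $\sigma$-equivariant, $0$-memory code, which is the commuting lower-left square. The hypothesis that $c$ admits a right-inverse graph morphism is exactly what upgrades $C$ from a code to a morphism of $\cat{Sys}$: by \Cref{condition} the induced map $C \colon S_A \to S_B$ is a surjective morphism, so $(S_B,\sigma)$ really is a quotient system rather than merely the image subshift.

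The main work, and the step I expect to be the obstacle, is the construction of $h\colon Y \to S_B$ as a homeomorphism with $h \circ q = C \circ \Phi$, equivalently the identity $\ker q = \ker(C \circ \Phi)$ of equivalence relations on $X$. Unwinding the definitions, $C(\Phi(x))_t$ is the index of the cell of $\mathcal{Y}$ containing $q(f^t(x))$, so $C\circ\Phi$ records the $\mathcal{Y}$-itinerary of the \emph{observed} forward orbit of $x$. The inclusion $\ker q \subseteq \ker(C\circ\Phi)$, which produces $h$ as a well-defined factorization, is where the compatibility $\mathcal{X} \sqsubseteq q^{-1}\mathcal{Y}$ must be combined with the Markov structure: I would argue that points in a common $q$-fiber generate identical observed itineraries, so that observing $q(x)$ alone determines the whole symbolic future in $S_B$. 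The reverse inclusion, giving injectivity of $h$, amounts to showing that $\mathcal{Y}$ is generating for the observed dynamics, so that distinct points of $Y$ are separated by their $\mathcal{Y}$-itineraries. Surjectivity of $h$ follows from surjectivity of $\Phi$ and of $C$; and since $Y$ and $S_B$ are compact Hausdorff, a continuous bijection is automatically a homeomorphism, while continuity of $h$ is inherited from that of $C\circ\Phi$ through the topological quotient $q$.

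Finally I would close exactly as sketched above: the isomorphism $(\Phi,h)\colon(X,f,q)\to(S_A,\sigma,C)$ in $\cat{PSys}$, the invariance of $\qent$ under isomorphism coming from \Cref{functoriality}, and \Cref{ifquotientismorphism} applied to the morphism $C$ together deliver $\tilde h(X,f,q) = h(S_B,\sigma)$. I expect essentially all the difficulty to be concentrated in establishing $\ker q = \ker(C\circ\Phi)$, i.e.\ that the observation faithfully realizes the symbolic quotient; the homeomorphism property and the entropy bookkeeping are then formal.
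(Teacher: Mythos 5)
Your proposal follows essentially the same route as the paper's proof: \Cref{condition} upgrades $C$ to a genuine morphism of systems, $h$ is obtained from the correspondence between $\ker(q)$ and $\ker(C)$ with compact-Hausdorffness turning a continuous bijection into a homeomorphism, and the entropy equality is obtained by chaining \Cref{functoriality} (isomorphism-invariance of $\qent$) with \Cref{ifquotientismorphism}. The paper phrases the construction of $h$ via direct images of $\Phi^{-1}$ restricted to $\ker(C)$ rather than via your factorization criterion $\ker q = \ker(C \circ \Phi)$, but this is the same argument, and the paper's treatment of that crux is no more detailed than yours.
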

\begin{proof}
By \Cref{condition}, the map $S$ is a $0$-memory cellular automaton between subshifts of finite types, since $s$ admits a right-inverse graph morphism. Since $S: S_A \to S_B$ is a morphism, the respective square in the above diagram commutes.

\noindent
We now construct the map $h$. By hypothesis, a trajectory through the cells of $\mathcal{X}$ uniquely identifies a point in $X$. A trajectory through $\mathcal{Y}$ is equivalent to a trajectory through its pullback onto $X$. Such a trajectory only identifies subsets of $X$ and not points. We extend $\Phi^{-1} : 2^{W_A} \to 2^X$ by direct images. The restriction $\Phi^{-1} |_{\ker(S)} : \ker(S) \to \ker (q)$ is a continuous bijection of compact Hausdorff spaces and therefore a homeomorphism. By \Cref{functoriality} we know that $\tilde h (X,f,q) = \tilde h (S_A, \sigma, S)$. But we have also shown that $(S_A, \sigma) \xrightarrow{S} (S_B, \sigma)$. By \Cref{ifquotientismorphism} we have $\tilde h (S_A, \sigma, S) = h (S_B, \sigma)$.
\end{proof}

Note that the above proposition applies to \Cref{motivation}.

\begin{eg}
Consider the circle map $E_3$, see \Cref{motivation}, and the corresponding Markov partition.
\begin{equation*}
\begin{tikzpicture}
\draw[line width=1pt] (0,0) circle (2);
\draw (0,2.2) -- (0,1.8);
\draw (-1.532, -1) -- (-1.932, -1);
\draw (1.532, -1) -- (1.932, -1);
\draw (2.3,1) node {$1$};
\draw (-2.3,1) node {$3$};
\draw (0,-2.4) node {$2$};
\draw (3.5,0) node {$\longmapsto$};
\draw[line width=1pt] (5,-2) -- (5,2);
\draw (4.8,2) -- (5.2,2);
\draw (4.8,-2) -- (5.2,-2);
\draw (4.8, -1) -- (5.2, -1);
\draw (5.6,0.5) node {$1$};
\draw (5.6,-1.5) node {$2$};
\end{tikzpicture}
\end{equation*}
The dynamic of the compatible Markov partition of the interval is the full shift on two symbols. Hence $\tilde h (E_3) = \ln (2)$. (Combining with \Cref{motivation}, we have obtained a case where $\tilde h (X,f,q) = \tilde h (X,g,q)$ for $f \neq g$.)
\end{eg}

\begin{eg}
Consider the circle map $E_3$, see \Cref{motivation}, and the horizontal projection $x \mapsto \sin ( 2 \pi x)$. The images of the cells of the Markov partition overlap in the unit interval in a way that makes a compatible selection impossible.

$$
\begin{tikzpicture}
\draw[line width=1pt] (0,0) circle (2);
\draw (0,2.2) -- (0,1.8);
\draw (-1.532, -1) -- (-1.932, -1);
\draw (1.532, -1) -- (1.932, -1);
\draw (2.3,1) node {$1$};
\draw (-2.3,1) node {$3$};
\draw (0,-2.4) node {$2$};
\draw (0,-3) node {$\Big\downarrow$};
\draw[line width=1pt] (-2,-5) -- (2,-5);
\draw (-2,-4.8) -- (-2,-5.2);
\draw (2,-4.8) -- (2,-5.2);
\draw [decorate,decoration={brace,amplitude=10pt}]  (-2,-4.6)--(-0.025,-4.6) node [black,midway,yshift=20pt] {$3$};
\draw [decorate,decoration={brace,amplitude=10pt}]  (0.025,-4.6)--(2,-4.6) node [black,midway, yshift=20pt] {$1$};
\draw [decorate,decoration={brace,amplitude=10pt}]  (1.732,-5.4)--(-1.732,-5.4) node [black,midway, yshift=-20pt] {$2$};
\end{tikzpicture}
$$
\end{eg}

We close by discussing a class of examples of our scheme for calculating quotient entropy. Every horseshoe map of the interval is finitely presented \cite[Paragraph 15.1]{katok97}. Under an appropriate restriction of admissible quotient maps, there always is a quotient chain.

\begin{deph}[Horseshoe map]\label{horseshoe}
A continuous map $f: [0,1] \to [0,1]$ is a horseshoe map if the following three conditions hold.
\begin{enumerate}[(i)]
\item There exists $n \in \N$, $n \geq 2$, such that the set $\left\{ 0, 1/n, 2/n , ..., (n-1)/n , 1 \right\}$ is $f$-invariant.
\item The restriction $f|_{[ (i-1)/n , i/n]}$ is nonconstant and affine for every $i \in [n]$.
\item There are no sinks in the Markov graph $F \in \mat_{n \times n} \left( \{0,1\} \right)$ given by
$$
F_{ij} = \begin{cases}
\text{$1$ if $f \left( \left( \frac{i-1}{n} , \frac{i}{n} \right) \right) \cap \left( \frac{j-1}{n} , \frac{j}{n} \right) \neq \varnothing$}\\
\text{$0$ otherwise.}
\end{cases}
$$
\end{enumerate}
\end{deph}

A Markov partition for a horseshoe map is given by $\left\{ \left( \frac{i-1}{n} , \frac{i}{n} \right) \right\}_{i=1}^n$. If we only consider certain quotient maps of the interval, we obtain a class of partially observable systems whose properties are straightforward.

\begin{deph}[Good quotient map for horseshoes]
Let $\left( [0, 1] , f \right)$ be a dynamical system where $f$ has horseshoe structure $\left\{ 0, 1/n, 2/n , ..., (n-1)/n , 1 \right\}$. The quotient map $q:[0,1] \to [0,1]$ is good if the following three conditions hold.
\begin{enumerate}[(i)]
\item $q \left( \left\{ 0, 1/n, 2/n , ..., (n-1)/n , 1 \right\} \right) \subseteq \left\{ 0, 1/n, 2/n , ..., (n-1)/n , 1 \right\}$
\item The restriction $q|_{[(i-1)/n , i/n]}$ is affine for every $i \in [n]$.
\item $q$ is monotone.
\end{enumerate}
\end{deph}

A nontrivial good quotient map must be constant on at least one interval. Monotonicity and surjectivity imply that $q(0) = 0$ and $q(1) = 1$.

\begin{prop}
Consider $\left( [0,1] , f , q \right)$ where $f : [0,1] \to [0,1]$ is a horseshoe map and $q: [0,1] \to [0,1]$ is a good quotient. Then there is a compatible selection corresponding to a right-invertible graph morphism.
\end{prop}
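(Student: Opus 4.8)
The plan is to extract the combinatorial data hidden in the good quotient, write down a compatible selection explicitly, and then produce a right inverse by choosing canonical representatives for the cells that survive the quotient; essentially all of the content will sit in verifying that this choice preserves edges.

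First I would encode $q$ combinatorially. Since $q$ is monotone, affine on each cell $\left( (i-1)/n , i/n \right)$, and sends grid points to grid points, it is determined by the non-decreasing sequence $0 = v_0 \le v_1 \le \cdots \le v_n = n$ with $v_i = n\, q(i/n)$, and the slope of $q$ on the $i$-th cell is the integer $d_i = v_i - v_{i-1} \ge 0$. I call a cell $U_i$ \emph{collapsing} if $d_i = 0$ and \emph{surviving} otherwise; by hypothesis a nontrivial good quotient has at least one collapsing cell. Monotonicity guarantees that the images $q(U_i)$ of the surviving cells are pairwise disjoint intervals arranged in order along $[0,1]$, while each collapsing cell maps to a single grid point, which is a shared endpoint of two such intervals.

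Next I would construct the compatible selection $c$ together with its candidate section $g$. I assign each collapsing cell to an adjacent surviving cell; monotonicity makes ``adjacent surviving cell'' unambiguous once the boundary values $q(0)=0$, $q(1)=1$ are fixed. This groups the $n$ cells into $m$ contiguous blocks, one per surviving cell, and defines a surjection $c \colon \{1,\dots,n\} \to \{1,\dots,m\}$. Setting $V_j = \bigcup_{c(i)=j} q(U_i)$ yields intervals that tile $[0,1]$ up to finitely many grid points, so $\mathcal{Y} = \{V_j\}$ is a topological partition with $\mathcal{X} \sqsubseteq q^{-1}\mathcal{Y}$; that is, a compatible topological partition, and by the corollary of the definition $c$ is then a graph morphism $A \to B$. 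For the section I let $g(j)$ be the unique surviving cell in the $j$-th block, so that $c \circ g = \id$ holds by construction. It remains only to check that $g$ is itself a graph morphism, i.e.\ that $B_{jk} = 1$ implies $A_{g(j) g(k)} = 1$.

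The hard part, and the step I expect to be the main obstacle, is exactly this edge preservation, because it is pulled in two directions. On one hand the refinement condition forces each collapsing cell into the block $q^{-1}(V_j)$ that contains its survivor, so that $f\!\left(q^{-1}V_j\right) = \bigcup_{c(i)=j} f(U_i)$ may strictly exceed $f(U_{g(j)})$; an edge $B_{jk}=1$ could then be \emph{witnessed by a collapsing cell} whose $f$-image reaches $q^{-1}(V_k)$, with no corresponding edge between the representatives. On the other hand, because $q$ is monotone each $q^{-1}(V_j)$ is a connected block, and the shared boundary grid points are $f$-invariant and negligible for the intersection condition, which is the leverage I would use to argue that no such spurious edge survives. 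Concretely I would try to show, using that $f$ is affine and nonconstant on every cell and that the Markov graph has no sinks, that the image intervals of a block and the cells reached from its survivor already agree up to grid points; establishing this overlap control is where the full strength of the horseshoe hypotheses must enter, and I expect it to be the genuinely delicate point rather than a routine check. Once $g$ is verified to be a right-inverse graph morphism, \Cref{condition} shows that the induced map is a morphism of systems and \Cref{fancydiagram} identifies $\tilde h (X,f,q)$ with $h(S_B,\sigma)$, closing the argument.
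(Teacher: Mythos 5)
Your construction is the same one the paper uses: take $\mathcal{Y}$ to be the open images of the cells on which $q$ is nonconstant (the surviving cells), let $c$ send each cell to the block it lies in, and let $g$ pick the surviving cell of each block, so that $c\circ g=\id$. The genuine gap is exactly the step you flag and then postpone: the verification that $g$ preserves edges. This is not merely ``genuinely delicate''; for the construction as you set it up it is false, so no argument from the stated hypotheses can close it. Take $n=4$, let $f$ be piecewise affine with $f(0)=\tfrac{1}{2}$, $f(\tfrac{1}{4})=0$, $f(\tfrac{1}{2})=1$, $f(\tfrac{3}{4})=\tfrac{1}{2}$, $f(1)=0$, and let $q$ be piecewise affine with $q(0)=0$, $q(\tfrac{1}{4})=\tfrac{1}{4}$, $q(\tfrac{1}{2})=\tfrac{1}{4}$, $q(\tfrac{3}{4})=\tfrac{1}{2}$, $q(1)=1$. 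Then $f$ is a horseshoe map (slopes $-2,4,-2,-2$; the grid is invariant; the Markov graph is strongly connected, so in particular has no sinks), $q$ is a good quotient, and $U_2$ is the unique collapsing cell, so the survivors are $U_1,U_3,U_4$. The rows of $A$ are $(1,1,0,0)$, $(1,1,1,1)$, $(0,0,1,1)$, $(1,1,0,0)$. If you attach $U_2$ to the left block, then $f(U_2)=(0,1)$ meets $U_3$, so $B_{12}=1$ while $A_{g(1)g(2)}=A_{13}=0$; if you attach it to the right block, then $f(U_2)$ meets $U_1$, so $B_{21}=1$ while $A_{g(2)g(1)}=A_{31}=0$. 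Either way the survivor section $g$ is not a graph morphism, and these are the only two options your rule allows. (Note also that your claim that ``adjacent surviving cell'' is unambiguous is wrong: an interior collapsing cell has a survivor on each side, and as this example shows the choice matters.) A right-invertible morphism does exist here, but only by representing the middle block by the collapsing cell itself, $c(2)=2$ and $g(2)=2$ --- a choice your construction, which always selects survivors, never considers, and which moreover falls outside the paper's definition of a compatible partition since $q(U_2)$ is a single point and hence not open.

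For fairness: the paper's own proof performs the identical construction (its $c$ is the min-block assignment, its $g$ is the unique cell properly inside a block, i.e.\ the survivor) and likewise asserts, without verification, that \Cref{fancydiagram} applies; so you matched the paper's route, including its silence at the decisive point. But the burden also does not sit where you place it. Since every collapsing cell is mapped by $q$ onto a grid point, which lies outside every open $V_j$, one has $q^{-1}(V_j)=U_{g(j)}$ exactly; with the Corollary's literal definition of $B$, the matrix $B$ is then the subgraph of $A$ induced on the survivors, and edge preservation of $g$ --- the step you single out as the hard one --- is a tautology. What fails instead is the step you treat as routine: the compatibility $\mathcal{X}\sqsubseteq q^{-1}\mathcal{Y}$ does not hold for collapsing cells (they lie in no $q^{-1}(V_j)$), so the Corollary does not apply and edge preservation of $c$ must be argued directly; the example above defeats that too, for both admissible assignments of $U_2$ (the edge $2\to 3$ needs $B_{12}=1$, the edge $2\to 1$ needs $B_{21}=1$, and each is $0$ in the respective case). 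So under every way of fixing the conventions, the same obstruction appears, and your proposal does not get past it.
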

\begin{proof}
Denote by $\mathcal{X} = \{ U_i \}_{i=1}^n$ the Markov partition of $[0,1]$ corresponding to $f$. Consider the image of the partition under the quotient map $q ( \mathcal{X} ) \coloneqq \left\{ q(U_i) \right\}_{i=1}^n$. Extract the subset $\mathcal{Y} \coloneqq \left\{ V \in q ( \mathcal{X} ) : \text{$V$ is open} \right\}$. Clearly $ | \mathcal{Y} | \leq | \mathcal{X} |$. Note that $\mathcal{Y}$ consists of mutually disjoint open sets whose union is dense in $[0,1]$. We have $\mathcal{X} \sqsubseteq q^{-1} \mathcal{Y}$. Since $q$ is monotone, we may enumerate the subsets $C_j = q^{-1} ( V_j )$ increasingly on $j \in [m]$ where $m \leq n$. Define $c: \{1, ..., n\} \to \{1,...,m\}$ by setting $c(i) = \min_{j \in [m]} \left\{ j: V_i \subseteq C_j \right\}$. For all $C_j$ there exists a unique $V_i$ such that $V_i \subseteq \left( C_j \setminus \cup_{k \neq j} C_k \right)$. Let $g: \{1,...,m\} \to \{1,...,n\}$ be this assignment. We have obtained $\mathcal{Y} = \{ V_j \}_{j=1}^m$ and $c: \{1,...,n\} \to \{1,...,m\}$ such that \Cref{fancydiagram} applies.
\end{proof}

\begin{eg}
\label{standard}
Consider the following horseshoe map $f$ and good quotient $q$.
\begin{equation*}
\begin{tikzpicture}
\draw[dashed] (0,0) -- (8,0);
\draw[dashed] (0,2) -- (8,2);
\draw[dashed] (0,4) -- (8,4);
\draw[dashed] (0,6) -- (8,6);
\draw[dashed] (0,8) -- (8,8);

\draw[dashed] (0,0) -- (0,8);
\draw[dashed] (2,0) -- (2,8);
\draw[dashed] (4,0) -- (4,8);
\draw[dashed] (6,0) -- (6,8);
\draw[dashed] (8,0) -- (8,8);

\draw[line width = 2pt] (0,4) -- (2,0);
\draw[line width = 2pt] (2,0) -- (4,8);
\draw[line width = 2pt] (4,8) -- (6,0);
\draw[line width = 2pt] (6,0) -- (8,4);

\draw[line width = 1pt] (0,0) -- (2,0);
\draw[line width = 1pt] (2,0) -- (6,8);
\draw[line width = 1pt] (6,8) -- (8,8);

\draw (7,7.5) node {$q$};
\draw (1,3) node {$f$};

\draw (12,4) node {$
\left(
\begin{array}{c c | c c}
1 & 1 & 0 & 0 \\
1 & 1 & 1 & 1 \\
\hline
1 & 1 & 1 & 1 \\
1 & 1 & 0 & 0
\end{array}
\right)$};
\end{tikzpicture}
\end{equation*}
We have $h([0,1],f) = \ln (3)$. The map given by $1,2 \mapsto 1$, $3,4 \mapsto 2$ is a graph morphism with right-inverse $1 \mapsto 2$, $2 \mapsto 3$. We have $\tilde h ( [0,1], f, q) = \ln(2)$.
\end{eg}

\printbibliography

\end{document}